\newtheorem{theorem}{Theorem}[section]
\newtheorem{prop}{Proposition}[section]
\newtheorem{lemma}{Lemma}[section]
\newtheorem{defi}{Definition}[section]
\newtheorem{remark}{Remark}[section]
\newcommand{\E}{{\mathbb E}}
\newcommand{\RR}{{\mathbb R}}
\newcommand{\Z}{{\mathbb Z}}
\newcommand{\PP}{\mathbb P}
\newcommand{\poi}{\mathcal{P}}
\newcommand{\mat}{\mathcal{M}}
\newcounter{mycount}
\newenvironment{romlist}{\begin{list}{\rm(\roman{mycount})}%
   {\usecounter{mycount}\labelwidth=1cm\itemsep -0pt}}{\end{list}}
\newenvironment{letlist}{\begin{list}{{\rm(\alph{mycount})}}%
   {\usecounter{mycount}\labelwidth=1cm\itemsep 0pt}}{\end{list}}
\begin{document}

\title{Percolation in invariant Poisson graphs with\\ i.i.d.\ degrees}

\author{Maria Deijfen
\thanks{Department of Mathematics, Stockholm University, 106 91 Stockholm.  mia at math.su.se}
\and Olle H\"{a}ggstr\"{o}m
\thanks{Department of Mathematics, Chalmers University of Technology. olleh at chalmers.se}
\and Alexander E.\ Holroyd\thanks{Microsoft Research, 1
Microsoft Way, Redmond, WA 98052, USA; \& University of British
Columbia, 121-1984 Mathematics Rd., Vancouver, BC V6T 1Z2,
Canada. \mbox{holroyd~at~microsoft.com}}}

\date{8 February 2010}

\maketitle

\thispagestyle{empty}

\begin{abstract}

Let each point of a homogeneous Poisson process in $\RR^d$
independently be equipped with a random number of stubs
(half-edges) according to a given probability distribution
$\mu$ on the positive integers.  We consider
translation-invariant schemes for perfectly matching the stubs
to obtain a simple graph with degree distribution $\mu$.
Leaving aside degenerate cases, we prove that for any $\mu$
there exist schemes that give only finite components as well as
schemes that give infinite components.  For a particular
matching scheme that is a natural extension of Gale-Shapley
stable marriage, we give sufficient conditions on $\mu$ for the
absence and presence of infinite components.
\end{abstract}

\renewcommand{\thefootnote}{}
\footnotetext{Key words: Random graph, degree distribution,
Poisson process, matching, percolation.} \footnotetext{AMS 2010
Subject Classification: 60D05, 05C70, 05C80.}

\section{Introduction}

Let $\poi$ be a homogeneous Poisson process with intensity 1 on
$\RR^d$. Furthermore, let $\mu$ be a probability measure on the
strictly positive integers.  We shall study
translation-invariant simple random graphs whose vertices are
the points of $\poi$ and where the degrees of the vertices are
i.i.d.\ with law $\mu$. Deijfen \cite{D} studied moment
properties achievable for the edge lengths in such graphs.
Here, we shall instead be interested in the
percolation-theoretic question of whether the graph contains a
component with infinitely many vertices.

We next formally describe the objects that we will work with.
For any random point measure $\Lambda$ we write
$[\Lambda]:=\{x\in\RR^d:\Lambda(\{x\})>0\}$ for its support, or
point-set. Let $\xi$ be a random integer-valued measure on
$\RR^d$ with the same support as $\poi$, and which, conditional
on $\poi$, assigns i.i.d.\ values with law $\mu$ to the
elements of $[\poi]$. The pair $(\poi,\xi)$ is a marked point
process with positive integer-valued marks.  For $x\in[\poi]$
we write $D_x$ for $\xi(\{x\})$, which we interpret as the
number of stubs at vertex $x$.

A {\bf matching scheme} for a marked process $(\poi,\xi)$ is a
point process $\mat$ on $(\RR^d)^2$ with the property that
almost surely for every pair $(x,y)\in[\mat]$ we have
$x,y\in[\poi]$, and such that in the graph $G=G(\poi,\mat)$
with vertex set $[\poi]$ and edge set $[\mat]$, each vertex $x$
has degree $D_x$.  The matching schemes under consideration
will always be {\bf simple}, meaning that $G$ has no self-loops
and no multiple edges, and {\bf translation-invariant}, meaning
that $\mat$ is invariant in law under the action of all
translations of $\RR^d$.  We say that a translation-invariant
matching is a {\bf factor} if it is a deterministic function of
the Poisson process $\poi$ and the mark process $\xi$, that is,
if it does not involve any additional randomness.  We write
$\PP$ and $\E$ for probability and expectation on the
probability space supporting the random triplet
$(\poi,\xi,\mat)$. For later purposes, we note that the notion
of a matching scheme generalizes from the Poisson case to
general simple point processes.

Let $(\poi^*,\xi^*,\mat^*)$ be the Palm version of
$(\poi,\xi,\mat)$ with respect to $\poi$, and write $\PP^*$ and
$\E^*$ for the associated probability law and expectation
operator. Informally speaking, $\PP^*$ describes the
conditional law of $(\poi,\xi,\mat)$ given that there is a
point at the origin, with the mark process and the matching
scheme taken as stationary background; see \cite[Chapter 11]{K}
for more details.  Note that since $\poi$ is a Possion process,
we have $[\poi^*]\stackrel{d}{=} [\poi]\cup \{0\}$. Let $C$
denote the volume of the component of the origin vertex for
$\poi^*$, that is, $C$ is the number of vertices that can be
reached by a path in $G(\poi^*,\mat^*)$ from the origin. Our
first result states that, excluding trivial cases, on one hand
it is always possible to obtain configurations that contain
only finite components in a translation-invariant way, while on
the other hand infinite components can always be achieved.
Furthermore, a connected graph is possible if and only if the
expected degree is at least 2.

\begin{theorem}\label{th:anything}
Let $\poi$ be a Poisson process of intensity $1$ in $\RR^d$,
for any $d\geq 1$, and let $D$ be a random variable with law
$\mu$.
\begin{letlist}
\item For any degree distribution $\mu$, there is a
    simple translation-invariant factor matching scheme
    such that $\PP^*(C<\infty)=1$.
\item If $\PP(D\geq 2)>0$, then there is a simple
    translation-invariant factor matching scheme
    such that $G$ has exactly one infinite component a.s., and
    furthermore $\PP^*(C=\infty\mid D_0\geq 2)=1$.
\item The following are equivalent.\vspace{-2mm}
\begin{romlist}
\item $\E[D]\geq 2$.
\item There exists a simple translation-invariant matching
    scheme for which the graph $G$ is a.s.\ connected.
\item There exists a simple {\em factor} matching scheme
    for which the graph $G$ is a.s.\ connected.
\end{romlist}
\end{letlist}
\end{theorem}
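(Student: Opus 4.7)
The three parts use different ideas. Part~(a) is an explicit hierarchical-box construction confining every component to a bounded region. Part~(b) and the implication $(i)\Rightarrow(iii)$ of (c) share a backbone-plus-leaves architecture. Part $(ii)\Rightarrow(i)$ of (c) is a Campbell / edge-counting inequality, and $(iii)\Rightarrow(ii)$ is immediate.

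For (a), fix a translation-invariant nested sequence of cube tilings of $\RR^d$ at scales $L_k=2^k$, made translation-invariant as a factor by a single uniformly random shift in $[0,1]^d$ applied jointly to all scales. Process the scales $k=1,2,\ldots$ in order: inside each level-$k$ cube, pair the stubs not matched at any smaller scale, following a fixed deterministic rule (e.g.\ lexicographic order by host-vertex position and stub label) and skipping pairings that would create self-loops or multi-edges; any remaining unmatched stubs are deferred to the next scale. Since each vertex has a.s.\ finitely many stubs, all of them get matched at some finite level; hence every vertex's component is contained in a single cube, so is finite.

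For (b), condition on the marks and consider the thinned process $\poi'=\{x\in[\poi]:D_x\ge 2\}$, of positive intensity $\PP(D\ge 2)$. I will construct, as a factor of $(\poi,\xi)$, a translation-invariant simple connected spanning subgraph of $\poi'$ whose vertex-degrees are everywhere $\le D_x$; using a factor spanning tree of $\poi'$, one can arrange that each $\poi'$-vertex uses about two of its stubs for the backbone. The remaining stubs at $\poi'$-vertices and all stubs at the $D_x=1$ vertices are then matched by the scheme of (a), which only adds finite pieces. Every $x\in\poi'$ lies in the unique infinite backbone component, so $\PP^*(C=\infty\mid D_0\ge 2)=1$. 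For $(i)\Rightarrow(iii)$ of (c) I would modify this so that each $D_x=1$ vertex is attached as a leaf to the backbone via its single stub. The $p_1=\PP(D=1)$ leaf-stubs per unit volume must be absorbed by the $\poi'$-stub surplus $(\E[D]-p_1)-2(1-p_1)=\E[D]-2+p_1$ left after the backbone, so absorption is feasible exactly when $\E[D]\ge 2$.

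For $(ii)\Rightarrow(i)$, let $B=B(0,R)$ and denote by $V(B)$, $E_0(B)$, $E_1(B)$ the vertices of $G$ in $B$, the edges of $G$ with both endpoints in $B$, and the edges of $G$ with exactly one endpoint in $B$, respectively. Since $G$ is a.s.\ connected and infinite, every component of $G|_B$ must send at least one edge out of $B$, and so $c\le|E_1(B)|$, where $c$ is the number of components of $G|_B$. Combined with $|E_0(B)|\ge|V(B)|-c$ and $2|E_0(B)|+|E_1(B)|=\sum_{v\in V(B)}D_v$, taking expectations and dividing by $|B|$ yields
\[
\E|E_1(B)|/|B|\ \ge\ 2-\E[D].
\]
A Campbell argument then shows $\E|E_1(B)|/|B|\to 0$ as $R\to\infty$, since a vertex at depth $s$ in $B$ can contribute only through $G$-neighbours at distance $>s$, whose expected number at the Palm origin vanishes because $\deg_G(0)<\infty$ a.s. Hence $\E[D]\ge 2$. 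The chief obstacle I anticipate is the backbone construction used in (b) and in $(i)\Rightarrow(iii)$: producing a translation-invariant factor spanning connected subgraph of $\poi'$ whose vertex-degrees respect the caps $D_x$ (particularly where $D_x=2$) is genuinely subtle, as the natural local rules on Poisson in $\RR^d$ do not automatically yield a single spanning component with bounded local degree.
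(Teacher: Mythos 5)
Your argument for (ii)$\Rightarrow$(i) of part (c) is essentially correct and takes a genuinely different route from the paper: you use a finite-volume edge count in $B(0,R)$ plus a Campbell-formula estimate showing the boundary term $\E|E_1(B)|/|B|$ vanishes (after the harmless reduction to $\E[D]<\infty$), whereas the paper runs a mass-transport argument in which $x$ sends unit mass along the unique edge whose removal would leave $x$ in a finite component, obtaining the pointwise inequality $D_x-2\geq M_x^{\mathrm{in}}-M_x^{\mathrm{out}}$ and applying the Mass Transport Principle. Both work; the mass-transport version avoids the boundary-term analysis entirely.

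The other parts have genuine gaps. For (a), a cube tiling randomized by a uniform shift is translation-invariant but is \emph{not} a factor of $(\poi,\xi)$ -- it uses external randomness -- so as written your scheme does not meet the statement's requirement; producing an equivariant global lattice shift as a deterministic function of $\poi$ is itself a nontrivial construction that you would have to supply. Moreover, ``each vertex has finitely many stubs, hence all get matched at some finite level'' is a non sequitur: at every scale each cube can leave stubs unmatched (odd parity of the deferred stubs, or multi-edge constraints when few vertices carry many stubs), so a given stub can in principle be deferred forever, and you need an actual argument (e.g.\ a Borel--Cantelli estimate on the probability of surviving to level $k$) that this does not happen. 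The paper sidesteps both issues by splitting the points by degree $n$, assigning $n+1$ types via quantiles of the nearest-neighbour distance, and gluing consecutive types with stable bipartite matchings to form cliques $K_{n+1}$. For (b) and (i)$\Rightarrow$(iii) of (c), the step you yourself flag as the ``chief obstacle'' -- a translation-invariant \emph{factor} connected spanning subgraph of the degree-$\geq 2$ points with all degrees at most $2$, i.e.\ a single doubly infinite path -- is precisely the content of the lemma the paper imports from Holroyd and Peres (a one-ended factor spanning tree, linearized by depth-first search), and without it parts (b) and (i)$\Rightarrow$(iii) are not proved. Finally, in (i)$\Rightarrow$(iii) the intensity comparison $\E[D]-2+p_1\geq p_1$ shows the leaf-absorption is not ruled out by counting, but you still need an explicit factor partial matching of the degree-$1$ points to the surplus stubs (the paper's Lemma~\ref{connect_simple_with_multi}, again built from type decompositions and stable bipartite matchings), plus a way to use up any remaining surplus stubs without creating multi-edges.
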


\noindent The implication (ii)$\Rightarrow$(i) of (c) is
analogous to various results for percolation on lattices to the
extent that the expected degree of vertices in infinite
clusters must be at least $2$; see, e.g. \cite[Theorem 2]{G} and
\cite[Theorem 6.1]{BLPS}.

We move on to consider a particular natural type of matching
scheme which in the special case where $\mu(\{1\})=1$ (i.e.,
deterministically one stub per vertex) is known as the {\bf
stable matching}.  See, e.g., \cite{HP}. The natural extension
to general degrees, called the {\bf stable multi-matching}, is
defined as follows; here and throughout, distance $|x-y|$ and
edge length are defined in terms of Euclidean metric on
$\RR^d$.
\begin{defi}\sloppypar
A matching scheme $\mat$ is said to be a {\bf stable
multi-matching} if a.s., for any two distinct points $x,y \in
[\poi]$, either they are linked by an edge or at least one of
$x$ and $y$ has no incident edges longer than $|x-y|$.
\end{defi}
\noindent We remark that the concept of a stable multi-matching
can be defined analogously for general point sets. Here however
we will use the term restricted to the specific situation
described above. We will see in Proposition \ref{prop:sm_works}
in Section \ref{sect:background} below that, for any dimension
$d\geq 1$ and any $\mu$, there is then a unique stable
multi-matching. Our main result on the stable multi-matching is
the following, giving sufficient conditions for existence and
non-existence of an infinite cluster. It may be noted that the
gap between the conditions in (a) and (b) is quite large; see
Section \ref{sect:further} for some discussion on this point.
\begin{theorem}\label{th:sm}
Consider the stable multi-matching.
\begin{letlist}
\item For any $d\geq 2$, there exists a $k=k(d)$ such that
    if $\PP(D\geq k)=1$, then
    $\PP^*(C=\infty)>0$.
\item For any $d\geq 1$ we have that if $\PP(D\leq 2)=1$
    and $\PP(D=1)>0$, then $\PP^*(C=\infty)=0$.
\end{letlist}
\end{theorem}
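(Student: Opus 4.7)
For part (a), my plan is a coarse-graining argument comparing the stable multi-matching to supercritical Bernoulli site percolation on $\Z^d$. Fix a large scale $L$ and partition $\RR^d$ into cubes of side $L$. Call a cube \emph{good} if its Poisson count (together with that of a fixed enlargement) is close to the expectation $L^d$; this event has probability tending to $1$ as $L\to\infty$ by standard Poisson concentration. Choose $k=k(d)$ large enough that stability forces every point in a good cube to have all of its $\geq k$ matching partners within a bounded multiple of $L$, and moreover forces every pair of adjacent good cubes to share at least one matching edge, since so many stubs per vertex must, by the stability condition applied to pairs of points close to the common face, be matched across any such face. A standard Liggett--Schonmann--Stacey stochastic domination then yields an a.s.\ infinite cluster of good cubes, whence $\PP^*(C=\infty)>0$. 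The main obstacle is the cross-face connectivity step, whose proof requires $k$ to be chosen sufficiently large relative to $d$, via a $d$-dimensional sphere-packing style argument.

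For part (b), observe first that since every degree is $1$ or $2$, each component of $G$ is a finite path, a finite cycle, a one-ended ray, or a bi-infinite path. Writing $R(x)$ for the longest edge length incident to $x$, the stable multi-matching condition reads $\min(R(x),R(y))\leq|x-y|$ for every non-adjacent pair $x,y\in[\poi]$. Applied at a degree-$1$ vertex $v$ with unique neighbor $w$ and $a:=|v-w|$, this yields the key local lemma: every other $u\in[\poi]\cap B(v,a)$ satisfies $R(u)\leq|u-v|<a$, so all edges incident to $u$ are strictly shorter than $a$, and in particular no edge of length $\geq a$ can be incident to a point of $B(v,a)\setminus\{v,w\}$.

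Now suppose for contradiction that $\PP^*(C=\infty)>0$. Since each infinite component contains at most one degree-$1$ vertex (rays contribute one endpoint and bi-infinite paths none), a simple boundary-counting argument shows that the density of infinite components intersecting any box $B$ is $O(|\partial B|)$, so the density of degree-$1$ vertices lying in infinite components is zero. Thus almost every degree-$1$ vertex lies in a finite component, while a positive density of degree-$2$ vertices lies in infinite components. I would complete the argument by a mass-transport scheme in which each infinite-component degree-$2$ vertex sends unit mass to a well-chosen nearby degree-$1$ vertex, using the local lemma to control the amount of mass received at any given degree-$1$ vertex. The main obstacle is making the receiving-side bound integrable and small enough to contradict the positive sending-side density: the hardest case is a bi-infinite path, which has no degree-$1$ endpoint of its own, so one must iterate the stability lemma along the path to locate a degree-$1$ vertex at controlled distance and bound carefully how many path-vertices can be routed onto it; handling the one-ended ray case and the bi-infinite case via different transport maps may be necessary.
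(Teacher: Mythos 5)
Your part (a) follows essentially the same route as the paper: renormalize $\RR^d$ into cubes, call a cube good when it and a fixed neighbourhood have controlled Poisson counts, apply Liggett--Schonmann--Stacey, and argue that stability forces edges between adjacent good cubes. One intermediate claim is backwards, though: if $\PP(D\geq k)=1$ with $k$ exceeding the total number of points in the neighbourhood of a good cube, a point there cannot have all of its $\geq k$ partners within a bounded multiple of $L$ --- most partners must be far away. What is actually needed (and what the paper proves) is the converse statement: every point $y$ within distance $ma$ of a point $x$ in a good cube lies among the $k$ nearest points of $x$, because the ball $B_{ma}(x)$ sits inside the supercube, which contains at most $k$ points; hence $x$ and $y$ \emph{desire} each other, and two points that desire each other must be linked in a stable multi-matching (otherwise one of them, say $x$, has all $D_x$ edges of length at most $|x-y|$, yet one of its $D_x$ partners lies outside its $D_x$ nearest points and so is farther than $y$). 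This clean mutual-desire step replaces your ``sphere-packing'' cross-face argument and closes part (a).

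Part (b) has a genuine gap at exactly the point you flag. The component classification and your local stability lemma at a degree-$1$ vertex are correct, and the ray case is disposed of in the paper by a one-line mass transport (every vertex of a singly infinite path sends mass $1$ to its endpoint, which then receives infinite mass while each point sends at most $1$); this shows rays do not occur at all, which is stronger than your density-zero claim and avoids your $O(|\partial B|)$ boundary count, which would require a priori control of edge lengths. But the bi-infinite case cannot be closed by the transport you propose: routing a positive density of degree-$2$ senders onto a positive density of degree-$1$ receivers with locally finite multiplicity yields no contradiction, since the Mass Transport Principle only bites when one side has finite expectation and the other infinite, and in your scheme both sides are finite. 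The paper resolves the two-ended case by a surgery of a different nature: re-randomize each degree $D_x$ independently with probability $e^{-|x|}$ (a.s.\ finitely many points are affected and the law of the marked process is preserved); locate a locally maximal edge $(x_m,x_{m+1})$ on the bi-infinite path (whose existence requires ruling out descending chains and, by a separate small mass transport, paths with a single locally minimal edge); and condition on the positive-probability event that exactly $x_m$ and $x_{m+1}$ are re-randomized, both to degree $1$. On that event the iterative construction of the stable multi-matching produces the identical edge set minus $(x_m,x_{m+1})$, severing the path into two singly infinite paths and contradicting the no-rays lemma. Some such reduction of the two-ended case to the one-ended case seems unavoidable; your local lemma alone does not supply it.
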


\noindent
The rest of this paper is organized as follows. In
Section \ref{sect:background} we offer some further background on the
model considered here. In Section \ref{sect:anything} we prove
Theorem \ref{th:anything}, while in Sections \ref{sect:perc} and
\ref{sect:non-perc} we prove parts (a) and (b), respectively, of
Theorem \ref{th:sm}. Finally, in Section \ref{sect:further} we briefly
mention some open problems and scope for further work.

%\begin{figure}
%\centering
%\epsfig{file=,height=10.3cm}\vspace{0.2cm}
%\epsfig{file=,height=10.3cm}\vspace{-0.3cm}
%\caption{Stable multi-matchings on the torus, with degree distribution given by
%$\PP(D=1)=1-\PP(D=2)=0.05$ (top); $D=2$ (bottom).}
%\end{figure}

\begin{figure}
\centering
\subfigure[$\PP(D=1)=1-\PP(D=2)=0.05$.]{\epsfig{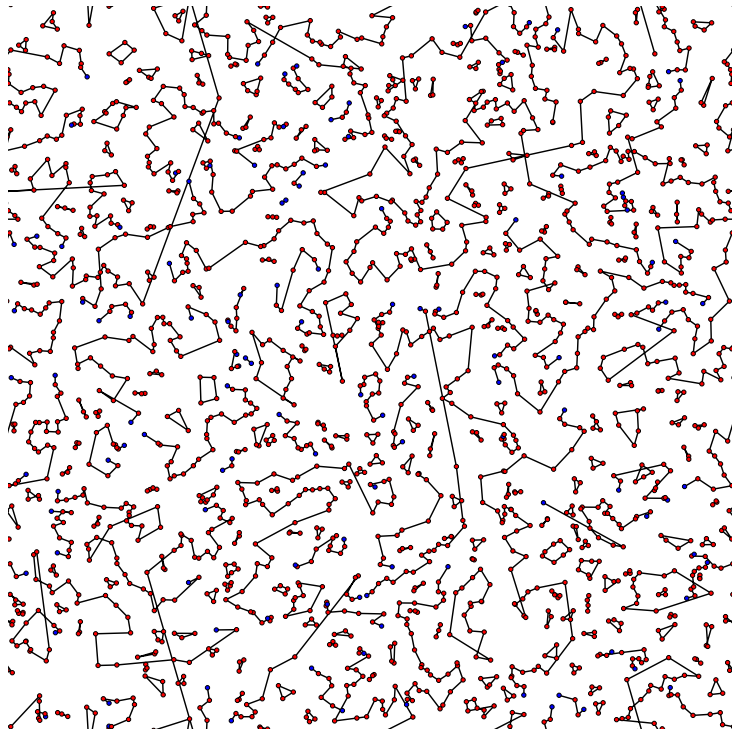}}
\subfigure[$\PP(D=2)=1$]{\epsfig{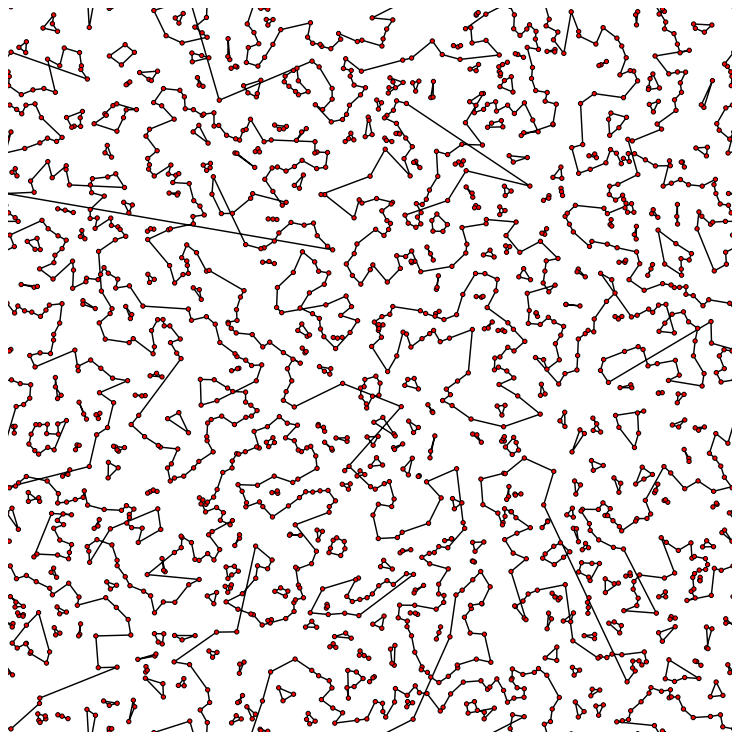}}
\vspace{-0.3cm}
\caption{Stable multi-matchings on the torus, with given degree distributions.}
\end{figure}

\begin{figure}
\centering
\subfigure[$\PP(D=3)=1-\PP(D=2)=0.05$.]{\epsfig{file=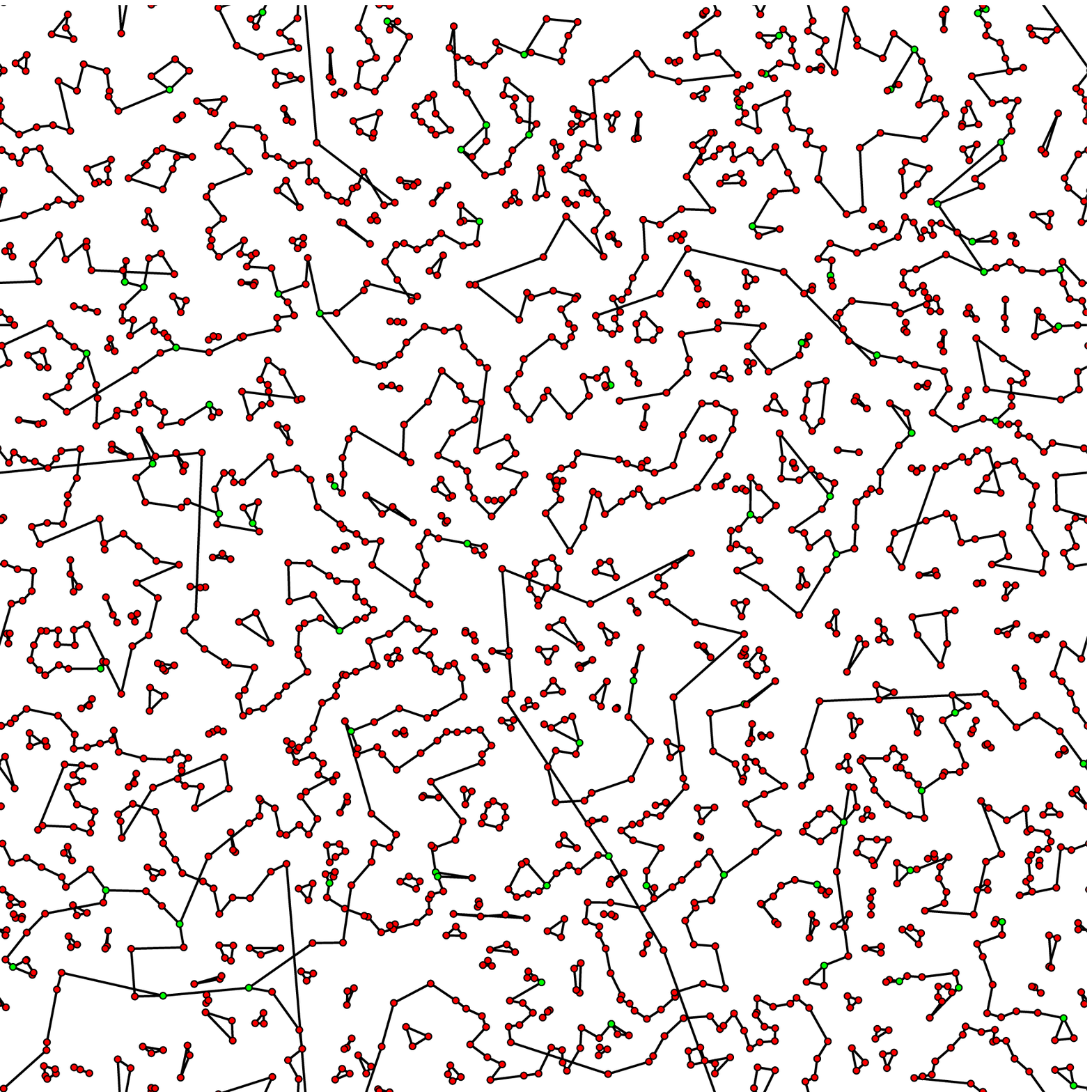,height=9.8cm}}
\subfigure[$\PP(D=3)=1$]{\epsfig{file=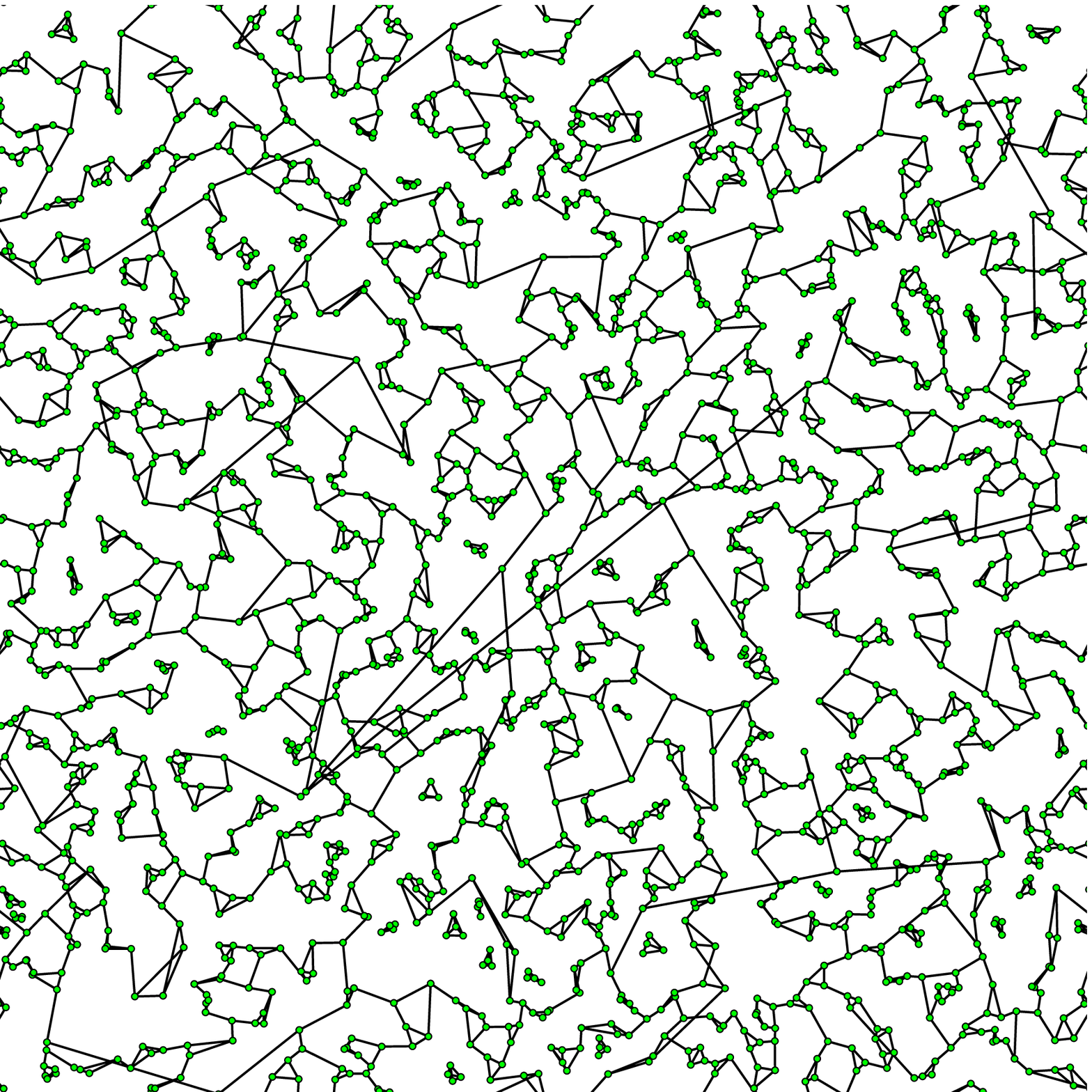,height=9.8cm}}
\vspace{-0.3cm}
\caption{Stable multi-matchings on the torus, with given degree distributions.}
\end{figure}

\section{Background and preliminaries}  \label{sect:background}

\subsection{Random graph models with i.i.d.\  degrees}

Random graphs with prescribed degree distribution have been
extensively studied in non-spatial settings; see e.g.\ \cite{BJR},
\cite{BDM}, \cite{CL1}, \cite{CL2}, \cite{MR1} and \cite{MR2}. Deijfen and Meester
\cite{DM} studied the problem of constructing
translation-invariant graphs with $\Z$ as vertex set and
i.i.d.\  degrees assigned to the vertices. They focussed on the
question of what moment properties on edge lengths are
achievable. Deijfen and Jonasson \cite{DJ} obtained further
results in this direction, which Jonasson \cite{J} extended to
more general deterministic lattices. Finally Deijfen \cite{D}
considered the same problem for the Poisson process $\poi$ on
$\RR^d$, which is the setting we are concerned with here.

\subsection{Stable matchings and stable multi-matchings} \label{sect:background_stable}

The concept of stable matchings goes back to Gale and Shapely
\cite{GS}, and has been extensively studied ever since. Holroyd
and Peres \cite{HP} considered the case of matching Poisson
points in $\RR^d$, while Holroyd et al.\ \cite{HPPS} went on to
consider bipartite matching of two independent Poisson
processes. These last two references provide constructions that
will be useful to us in later sections. We isolate the relevant
findings in the following result concerning the existence of
matchings schemes with constant degree $1$ in
translation-invariant point processes. The {\bf intensity} of a
translation-invariant point process is the expected number of
points in a fixed set of unit volume. A set $U\subset \RR^d$ is
said to be {\bf non-equidistant} if there are no distinct
points $x,y,u,v\in U$ with $|x-y|=|u-v|$ or $|x-y|=|y-v|$,
while a {\bf descending chain} is an infinite sequence
$\{x_i\}\subset U$ such that $|x_i-x_{i-1}|$ is strictly
decreasing.

\begin{prop}[Existence of matchings]\label{prop:usm} $\mbox{ }$
\begin{letlist}
\item Let $\mathcal{R}$ be translation-invariant point
    processes on $\RR^d$ with finite intensity, and assume
    that a.s.\ $[\mathcal{R}]$ is non-equidistant and has
    no descending chains. Then a factor matching scheme for
    $\mathcal{R}$ with constant degree $1$ exists.
\item Let $\mathcal{R}$ and $\mathcal{S}$ be point
    processes on $\RR^d$, jointly ergodic under
    translations, and with equal finite intensities. Assume
    that $[\mathcal{R}]\cup [\mathcal{S}]$ is almost surely
    non-equidistant and has no descending chain. Then there
    exists a factor matching scheme with constant degree
    $1$ for $[\mathcal{R}] \cup [\mathcal{S}]$, having the
    property that every point in $[\mathcal{R}]$ is linked
    to a point in $[\mathcal{S}]$ and vice versa.
\end{letlist}
\end{prop}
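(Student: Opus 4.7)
The plan is to construct, in both parts, a Gale--Shapley-style iterative matching of the kind used by Holroyd--Peres \cite{HP} (for part (a)) and Holroyd--Pemantle--Peres--Schramm \cite{HPPS} (for part (b)). For (a), I would define a nested sequence of partial matchings $M_0\subset M_1\subset\cdots$ on $[\mathcal{R}]$ by setting $M_0=\emptyset$ and, given $M_n$, calling $x\in[\mathcal{R}]$ \emph{free at stage $n$} if no edge of $M_n$ is incident to $x$, and then taking
\[
M_{n+1}=M_n\cup\bigl\{\{x,y\}:x,y\text{ free and each is the other's nearest free point}\bigr\}.
\]
Local finiteness of $[\mathcal{R}]$ ensures the nearest-free-point function is well-defined, while non-equidistance makes it single-valued. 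The desired matching is $\mat:=\bigcup_n M_n$; since the construction is translation-equivariant and depends deterministically only on $[\mathcal{R}]$, it yields a translation-invariant factor.

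The main technical obstacle is to show that almost surely every vertex of $[\mathcal{R}]$ is eventually matched, i.e.\ that the residue $F_\infty:=\bigcap_n\{x\in[\mathcal{R}]:x\text{ is free at stage }n\}$ is empty. Suppose not; then $F_\infty\subset[\mathcal{R}]$ is locally finite and inherits both non-equidistance and the absence of descending chains. The key sub-claim is that $F_\infty$ contains a mutually-closest pair: starting from any $x_0\in F_\infty$, define $y_0,x_1,y_1,\ldots$ iteratively by taking the nearest $F_\infty$-neighbour of the current vertex; by non-equidistance, either the distances strictly decrease (giving a descending chain and a contradiction) or the chain collapses into a mutually-closest pair $(x_i,y_i)$. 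But then in any ball around $x_i$ the free points eventually reduce to $F_\infty\cap B$, so for $n$ large enough $x_i$ and $y_i$ are each other's nearest free point at stage $n$ and the pair is matched in $M_{n+1}$, contradicting $x_i,y_i\in F_\infty$. Hence $F_\infty=\emptyset$ and $\mat$ has degree $1$ everywhere.

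For (b), the construction is the same except that $M_{n+1}$ is updated only with \emph{bichromatic} pairs $\{r,s\}$, $r\in[\mathcal{R}]$, $s\in[\mathcal{S}]$, each being the other's nearest free point of the opposite colour. The factor and invariance properties follow as before, and the termination argument carries over verbatim once one notes that the alternating chain produced in the sub-claim now lives in $[\mathcal{R}]\cup[\mathcal{S}]$, whose no-descending-chain hypothesis yields the same contradiction. The only new ingredient is to rule out that the bichromatic residue is entirely monochromatic: this is handled by a standard mass-transport argument which, combined with joint ergodicity and the equality of intensities, forces the densities of unmatched $\mathcal{R}$- and $\mathcal{S}$-points to coincide, so that when the descending-chain argument shows one vanishes, both do.
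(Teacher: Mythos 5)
Your proposal is correct and is in substance the same argument the paper relies on: the paper simply cites the stable matching of Holroyd--Peres for (a) and the stable bipartite matching of Holroyd--Pemantle--Peres--Schramm for (b), and your iterated mutually-closest-pair construction, together with the termination argument (a nonempty residue would contain either a descending chain or a mutually closest pair that would eventually be matched, plus the mass-transport/ergodicity step to rule out a monochromatic residue in the bipartite case), is exactly that construction and the standard proof that it exhausts all points -- it also mirrors the paper's own proof of Proposition \ref{prop:sm_works}. No gaps worth flagging.
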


\begin{proof}
As an example that proves (a), we can take the stable matching,
whose existence and uniqueness is established in \cite[pp.\ 10-11]{HP}.
For (b) we can take the stable
bipartite matching of $\mathcal{R}$ and $\mathcal{S}$ (i.e.\
the stable matching where two points that are either both in
$\mathcal{R}$ or both in $\mathcal{S}$ are postulated to have
distance $\infty$, while the distance between other pairs of
points is the usual Euclidean one), whose existence and
uniqueness is established in of \cite[Proposition 9]{HPPS}.
\end{proof}

We remark at this point that the Poisson process $\poi$
satisfies the assumptions of Proposition \ref{prop:usm} (a),
because it satisfies the no descending chains property as first
noted in \cite{HM}; see also \cite{DL}.

Moving on to stable multi-matchings, consider the following
procedure for matching the stubs of $(\poi,\xi)$.  In a set of
points $S\subset\RR^d$, call a distinct pair $x,y\in S$ {\bf
mutually closest} if $x$ is the closest point to $y$ in
$S\setminus \{y\}$, and vice-versa.

\begin{list}{Step \arabic{mycount}.\ }{\usecounter{mycount}}
\item Consider the set $[\poi]$ of all points.
    An edge is created between each mutually closest pair
    in this set, and one stub is removed from each of these
    points.

\item Consider the set of all points that still
    have at least one stub after step 1. Two such points
    are called compatible if no edge was created between
    them in step 1. An edge is created between each
    compatible mutually closest pair in this set, and one
    stub is removed from each of these points.
$$
\vdots
$$
\item[Step $n$.] Consider the set of all points that still
    have at least one stub.  Two such points are called
    compatible if no edge has been created between them. An
    edge is created between each compatible mutually
    closest pair in this set, and one stub is removed from
    each of these points.
$$
\vdots
$$
\end{list}

It is immediate that this procedure will not produce self-loops
or multiple edges, and the resulting process is clearly
translation-invariant. We will show that a.s.\ all stubs are
eventually matched, and moreover that the resulting graph is
the unique stable multi-matching of $(\poi,\xi)$.
\begin{prop}\label{prop:sm_works}
Let $(\poi,\xi)$ be a marked Poisson process as before. Almost
surely, the iterative multi-matching procedure described above
exhausts the set of stubs, and the limiting graph (after an
infinite number of iterations) is a stable multi-matching. No
other stable multi-matching of $(\poi,\xi)$ exists.
\end{prop}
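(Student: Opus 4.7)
The plan is to establish three claims: (i) any stable multi-matching $\mat'$ must contain every edge produced by the iterative procedure; (ii) the procedure almost surely exhausts every stub; (iii) the resulting limit graph is itself a stable multi-matching. Existence then follows from (ii) and (iii), while uniqueness follows by combining (i) and (ii): any stable multi-matching $\mat'$ must contain the procedure's edges, and since those already realize the full degree $D_x$ at every vertex, $\mat'$ coincides with the limiting graph.

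For (i), I would induct on the step number $n$. Let $\mat_n$ denote the set of edges created in steps $1,\dots,n$ and $S_n$ the set of points still holding at least one stub after step $n$. Assuming $\mat_{n-1}\subseteq \mat'$, the vertices of $\mat'$ with unmatched stubs form exactly $S_{n-1}$ with the same residual stub counts. Let $x,y\in S_{n-1}$ be a mutually closest compatible pair and suppose for contradiction that $(x,y)\notin \mat'$. Then $x$ must spend one of its remaining stubs in $\mat'$ on some $z\in S_{n-1}\setminus\{y\}$, and since $y$ is the unique closest compatible point to $x$ in $S_{n-1}$ (by a.s.\ non-equidistance of $[\poi]$) we have $|x-z|>|x-y|$. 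Symmetrically, $y$ has an incident edge $(y,w)\in\mat'$ with $|y-w|>|x-y|$. Thus $x$ and $y$ each carry an $\mat'$-edge longer than $|x-y|$ yet are not joined to each other, contradicting stability.

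For (iii), which I would prove once (ii) is available, take distinct $x,y\in[\poi]$ with $(x,y)\notin[\mat]$, and suppose toward a contradiction that both $x$ and $y$ have incident edges in $G$ longer than $|x-y|$, say $(x,z)$ added at step $m$ and $(y,w)$ added at step $m'$. Without loss of generality $m\le m'$. Then at the start of step $m$ the vertex $y$ still holds a stub (it only spends its last one at step $m'\ge m$), so $y\in S_{m-1}$; moreover $y$ is compatible with $x$, since the edge $(x,y)$ is never formed. But $|x-y|<|x-z|$ contradicts the choice of $z$ as a closest compatible point to $x$ in $S_{m-1}$.

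The main obstacle is (ii), showing that $S_\infty:=\bigcap_n S_n=\emptyset$ a.s. The core tool is the no-descending-chains property of $[\poi]$ (used already in Proposition~\ref{prop:usm}; see \cite{HM,DL}). So long as $S_{n-1}\neq\emptyset$, iterating ``nearest compatible point'' within $S_{n-1}$ must close into a mutually closest compatible pair, because otherwise one would produce an infinite strictly decreasing sequence of distances in $[\poi]$. Hence the procedure keeps producing edges at every nonempty step. To upgrade this to emptiness of $S_\infty$, I would assume $\PP^*(0\in S_\infty)>0$ and apply the same nearest-compatible iteration inside $S_\infty$: any mutually closest compatible pair $(u,v)$ in $S_\infty$ would, by monotonicity of $S_n$ in $n$ and the fact that $u,v$ remain in $S_n$ for every $n$, eventually become mutually closest compatible in some $S_n$ and hence get matched, contradicting $u,v\in S_\infty$. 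Ruling out mutually closest pairs in $S_\infty$ in turn forces a descending chain inside $S_\infty\subseteq[\poi]$, the final contradiction. Bookkeeping the interaction between the shrinking sets $S_n$ and the descending-chain argument is the delicate point, and I would expect to invoke a mass-transport principle along the nearest-compatible-point map to make the argument fully rigorous.
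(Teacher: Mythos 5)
Your proposal is correct and follows essentially the same route as the paper's proof: uniqueness by induction over the stages of the algorithm, stability essentially from the definition of the procedure, and exhaustion of stubs by showing that the set of points retaining stubs can contain no mutually closest compatible pair and hence would yield a descending chain, which the Poisson process does not admit. The only ingredient you leave to the flagged ``bookkeeping'' is the observation (which the paper gets in one line from ergodicity) that the set of points with surviving stubs is a.s.\ either empty or infinite, so that each such point has a nearest compatible point among them and the descending-chain iteration can actually be carried out.
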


\begin{proof} For the case where $\mu(\{1\})=1$ this is an application
the result from \cite{HP} mentioned in the proof
of Proposition \ref{prop:usm} (a). The general case is a
straightforward adaptation of their argument, as follows.

Let $\poi'$ be the process of points with at least one
unmatched stub on them after the above matching procedure is
completed.  Then $\poi'$ is an ergodic point process and hence
has either a.s.\ infinitely many points or a.s.\ no points. To
rule out the former case, call two points in $[\poi']$
compatible if they do not have an edge between them in the
configuration obtained from the matching procedure. Then create
a directed graph $G'$ with $[\poi']$ as vertex set by drawing a
directed edge from each point to its nearest compatible point
(which exists because the initial numbers of stubs were
finite).  Some thought reveals that $G'$ cannot contain cycles
of length more than two, and that each finite component must
contain precisely one cycle of length two. However, a cycle of
length two is also impossible, since it corresponds to two
mutually closest points with no edge between them and an
unmatched stub at each point, and between such points an edge
would indeed have been created at some stage in the matching
procedure.  Hence $G'$ has no finite components, and no cycles.
This implies that if $[\poi']$ is non-empty, then following the
outgoing edges starting at any $x\in [\poi']$ yields a
descending chain. Since descending chains do not exist $\poi$
(and hence not in $\poi'$), we conclude that $[\poi']$ is a.s.\
empty, as desired.

That the resulting multi-matching is stable follows from the
definition: an unstable pair of points would have had an edge
created between them at some stage of the matching procedure.
That it is the unique matching with this property follows by
induction over the stages in the algorithm to show that each
edge that is present in the resulting configuration must be
present in any stable matching of the stubs.
\end{proof}

\begin{remark} \label{rem:sm_works_more_generally}
{\rm Note that the given procedure works and proves Proposition
\ref{prop:sm_works} in the greater generality where the Poisson
process $\poi$ is replaced by any point process satisfying the
requirements of Proposition \ref{prop:usm}.}
\end{remark}

\begin{remark} \label{rem:sm_works_with_some_edges_already_present}
{\rm We will later want to apply the given procedure in
situations where some pairs of vertices already have an edge
between them and additional connections between such vertices
are prohibited.  Provided the existing edges form a
translation-invariant process, the proof of Proposition
\ref{prop:sm_works} shows that the process still exhausts all
remaining stubs, and results in a translation-invariant
process.}
\end{remark}

\subsection{Mass transport}

The so-called mass transport method in percolation theory was
originally developed for the setting of nonamenable lattice
(see \cite{BLPS} for background) but has turned
out to be a convenient tool also for the more familiar setting
of processes living on $\Z^d$ or $\RR^d$. Here we formulate a
special case adapted to the particular needs of the present
paper.  We define a {\bf mass transport} to be a random measure
$T$ on $(\RR^d)^2$ that is invariant in law under translations
of $\RR^d$, that is, $T(A+x,B+x)\stackrel{d}{=}T(A,B)$ for all
Borel $A,B\subseteq\RR^d$ and $x\in\RR^d$, where we write
$T(A,B):=T(A\times B)$.  We interpret $T(A,B)$ as the amount of
mass transported from $A$ to $B$.  Let $Q$ denote the unit cube
$[0,1)^d$.

\begin{lemma}[Mass Transport Principle]\label{le:mt}
Let $T$ be a mass transport. Then
\[
\E\, T(Q,\RR^d) =\E\, T(\RR^d,Q) \, .
\]
\end{lemma}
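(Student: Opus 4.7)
The plan is to exploit translation invariance of $T$ together with a decomposition of $\RR^d$ into translates of the unit cube $Q$. Since $Q=[0,1)^d$ tiles $\RR^d$, we may write $\RR^d$ as the disjoint union $\bigsqcup_{z\in\Z^d}(Q+z)$. Using countable additivity of the measure $T(Q,\cdot)$ on its second argument gives
\[
T(Q,\RR^d)=\sum_{z\in\Z^d} T(Q,Q+z),
\]
and, similarly, decomposing the first argument,
\[
T(\RR^d,Q)=\sum_{z\in\Z^d} T(Q+z,Q)=\sum_{z\in\Z^d} T(Q-z,Q),
\]
where the last equality is just a relabeling $z\mapsto -z$ of the summation index.

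Next I would take expectations and use Tonelli's theorem to interchange $\E$ with the sum, which is permitted because $T$ takes nonnegative values. This yields
\[
\E\,T(Q,\RR^d)=\sum_{z\in\Z^d}\E\,T(Q,Q+z)
\quad\text{and}\quad
\E\,T(\RR^d,Q)=\sum_{z\in\Z^d}\E\,T(Q-z,Q).
\]
The heart of the argument is now the term-by-term identification: by the hypothesis of translation invariance, applied with shift $-z$,
\[
T(Q,Q+z)\stackrel{d}{=}T(Q-z,Q),
\]
so in particular these random variables have equal expectation. Summing over $z\in\Z^d$ gives the desired equality $\E\,T(Q,\RR^d)=\E\,T(\RR^d,Q)$.

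There is no serious obstacle here; the argument is essentially bookkeeping, and the only mildly delicate point is the justification of interchanging $\E$ with the infinite sum, which as noted above is immediate from nonnegativity via Tonelli. One might also check that the two quantities may both equal $+\infty$, in which case the identity still holds as an equation in $[0,\infty]$, so no integrability hypothesis is required.
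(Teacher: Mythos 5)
Your proof is correct and follows exactly the same route as the paper's: decompose $\RR^d$ into the translates $Q+z$, use nonnegativity (Tonelli) to interchange $\E$ with the sum, and apply translation invariance termwise to identify $\E\,T(Q,Q+z)$ with $\E\,T(Q-z,Q)$. You have merely spelled out the justifications that the paper's one-line computation leaves implicit.
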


\begin{proof}
\[
\E\, T(Q,\RR^d)  =  \sum_{z\in\Z^d}\E\, T(Q,Q+z)
 =   \sum_{z\in\Z^d}\E\, T(Q-z,Q) = \E\, T(\RR^d,Q) \, .
\qedhere
\]
\end{proof}

\section{Anything is possible} \label{sect:anything}

The task in this section is to prove Theorem \ref{th:anything},
and we begin with part (a).

\begin{proof}[Proof of Theorem \ref{th:anything} (a)]
 We need to describe a factor matching scheme that gives only finite
components. To this end, let $\poi_n$ denote the process of
points $x\in[\poi]$ with $D_x=n$ (recall that $D_x$ denotes the
number of stubs attached to $x$).  We will partition $[\poi_n]$
into groups of size $n+1$.  The configuration is then taken to
consist of complete graphs on each of these groups.

Take $n$ such that $\poi_n$ is non-empty. To partition
$[\poi_n]$, we assign each point in $[\poi_n]$ a type
$i\in\{1,\ldots,n+1\}$ as follows. Let $R^*$ be the distance
from the origin to the closest other point in the Palm version
of $\poi_n$ and let $0=r_0,r_1,\ldots,r_n,r_{n+1}=\infty$ be
such that
\begin{equation}\label{eq:type_assignment}
\PP^*(r_{i-1}<R^*\leq r_i)=\frac{1}{n+1},\quad i=1,\ldots,n+1.
%H(r_1)=H(r_2)-H(r_1)=\ldots=H(r_n)-H(r_{n-1})=1-H(r_n)=\frac{1}{n+1}.
\end{equation}
For $x\in[\poi_n]$, let $R_x$ denote the distance to the
nearest other point in $[\poi_n]$. We assign $x\in[\poi_n]$
type $i$ if $r_{i-1}<R_x\leq r_i$, and let $\poi_n^i$ be the
process of points of $\poi_n$ of type $i$. Note that this
assignment involves no randomness beyond the Poisson process
itself,
%If we allow
%additional randomness (i.e., if we drop the requirement that
%the matching scheme should be a factor), then we can just flip
%an $(n+1)$-sided coin independently at each point in
%$[\poi]_n$.
and that for each given $n$, the processes
$\poi_n^1,\ldots,\poi_n^{n+1}$ have equal intensities and are
jointly ergodic under translations. By Proposition
\ref{prop:usm} (b), this means that for each $i=1,\ldots,n$ we
can find a matching scheme that matches each type $i$ point to
a unique type $i+1$ point and vice versa.  The components of
the graph obtained by taking the union of these matchings
partition $[\poi_n]$ into groups of size $n+1$ with one point
of each type in each group.
\end{proof}

For the proofs of parts (b) and (c) of Theorem
\ref{th:anything} the following lemma will be useful.

\begin{lemma} \label{lem:doubly_infinite_paths}
For a Poisson process with exactly $2$ stubs on each point,
there exists a factor matching scheme in which $G$ has a single
component consisting of a doubly infinite path.
\end{lemma}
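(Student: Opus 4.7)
My plan is to build the required matching scheme via iterated hierarchical pair-matching. I will construct a nested sequence of factor subgraphs $G_0\subseteq G_1\subseteq\cdots$ with $G_0:=([\poi],\emptyset)$, such that at each stage $k\geq 1$ the components of $G_k$ are finite paths on exactly $2^k$ vertices; the final matching is then $G_\infty:=\bigcup_k G_k$. To pass from $G_{k-1}$ to $G_k$, let $\mathcal{R}_{k-1}\subseteq[\poi]$ be the factor point process that picks one endpoint of each $G_{k-1}$-path-component---say the one of lexicographically smaller coordinate. This set is a translation-invariant factor subset of $\poi$, has intensity $2^{-(k-1)}$, and inherits non-equidistance and absence of descending chains from $\poi$. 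By Proposition~\ref{prop:usm}(a) there exists a factor perfect matching of $\mathcal{R}_{k-1}$, and for each matched pair of representatives I add a single edge between them. Two previously disjoint paths on $2^{k-1}$ vertices are thereby joined into one path on $2^k$ vertices.

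Two properties of $G_\infty$ then follow routinely. First, $G_\infty$ is acyclic: any cycle would be finite, hence contained in some $G_k$, which is a disjoint union of paths by construction. Second, every vertex of $[\poi]$ has degree $2$ in $G_\infty$ almost surely: the density of $G_k$-endpoint vertices equals $2\cdot 2^{-k}$, so the Palm probability that the origin is a $G_k$-endpoint equals $2\cdot 2^{-k}$, which is summable in $k$; the first Borel--Cantelli lemma then gives that the origin is interior (degree $2$) in $G_k$ for all sufficiently large $k$, and since degrees only increase as $k$ grows the origin has degree $2$ in $G_\infty$. Ergodicity upgrades this to every vertex. Hence $G_\infty$ is a $2$-regular, acyclic, factor graph on $[\poi]$, i.e., a disjoint union of doubly infinite paths.

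The main obstacle will be to show that $G_\infty$ has only one component. Heuristically this should hold: $G_k$-paths have spatial diameter of order $2^{k/d}$ and are merged with other paths at this scale, so any two fixed points of $[\poi]$ ought eventually to lie in the same path-component. But a naively chosen deterministic pairing rule could in principle preserve some large-scale splitting of space into disjoint ``strands,'' so extra care is needed. I would overcome this by injecting spatial mixing at each stage---for example, by extracting factor i.i.d.\ fair coin flips from $\poi$ in the usual way (cf.\ \cite{HP}), using them to split $\mathcal{R}_{k-1}$ into two equal-intensity factor subprocesses, and pairing the corresponding paths by the bipartite stable matching of Proposition~\ref{prop:usm}(b). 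The number of doubly infinite components of $G_\infty$ is a translation-invariant, hence a.s.\ constant, quantity, and a mass transport and ergodicity argument should then identify it as $1$, completing the proof.
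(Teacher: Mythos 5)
Your hierarchical merging does correctly produce a translation-invariant factor graph that is $2$-regular and acyclic, hence a union of doubly infinite paths: the bookkeeping with path representatives, the degree count via the intensity $2^{1-k}$ of stage-$k$ endpoints, and the acyclicity argument are all sound, and Proposition~\ref{prop:usm}(a) does apply at every stage since $\mathcal{R}_{k-1}\subseteq[\poi]$ inherits non-equidistance and the no-descending-chains property. But this only re-proves the weaker statement of Remark~\ref{rem:weaker_doubly_infinite_paths}; the entire content of the lemma (and the reason it, rather than the remark, is needed for Theorem~\ref{th:anything}(c)) is the \emph{single-component} claim, and that is precisely the step you leave unproved. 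Your closing assertion that ``a mass transport and ergodicity argument should then identify'' the number of components as $1$ cannot work as a soft general principle: ergodicity only gives that the number of doubly infinite components is an a.s.\ constant in $\{1,2,\dots,\infty\}$, and that constant can perfectly well be $\infty$ for a translation-invariant factor $2$-regular acyclic graph --- the cone construction of Remark~\ref{rem:weaker_doubly_infinite_paths} is presented by the paper exactly because it yields a union of paths rather than a single one, and whether the stable multi-matching with $D\equiv 2$ has even one infinite component is listed as open in Section~\ref{sect:further}. Any proof of connectivity must therefore exploit your specific merging rule, by showing that for two fixed Poisson points the stage-$k$ paths containing them coincide for some finite $k$; injecting factor coin flips and using the bipartite stable matching does not obviously force this hierarchical coalescent to merge everything, and you offer no estimate that it does.

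By contrast, the paper's proof outsources exactly this difficulty to \cite{HP}: one first constructs a factor \emph{one-ended spanning tree} on $[\poi]$ (where all the connectivity work is done), and then converts it into a doubly infinite Hamiltonian path by ordering children by distance and taking the depth-first order. If you wish to complete your route, you need an analogue of that work --- a quantitative argument that any two path-clusters merge after finitely many stages --- which is a genuine percolation-type estimate, not a consequence of invariance.
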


\begin{proof} This is contained in the proof of \cite[Theorem 1]{HP}.
For expository purposes, let us nevertheless say a few words
about how it is proved. The main step is to construct, in a
translation-invariant way, a one-ended tree whose vertex set is
$[\poi]$. Once that is done, the single doubly infinite path is
easily constructed from the tree by first ordering the children
of each vertex according to distance from the parent, then
ordering all vertices according to depth-first search, and
finally linking each pair vertices that fall next to each other
in this ordering by an edge.
\end{proof}

\begin{remark} \label{rem:weaker_doubly_infinite_paths}
{\rm If we relax the requirement in Lemma
\ref{lem:doubly_infinite_paths} to ask for a union of doubly
infinite paths rather than a single infinite path (this will be
enough for our proof of Theorem \ref{th:anything} (b) but not
for the proof of Theorem \ref{th:anything} (c)), then the tree
construction of \cite{HP} can be replaced by the
following construction: Define the cone $V=\{y\in\RR^d: y_1\geq
|(y_2,\ldots,y_d)|\}$, where $y=(y_1,\ldots,y_d)$, and, for
$x\in[\poi]$, put a directed edge to the (almost surely unique)
point in $(x+V)\cap [\poi]$ whose first coordinate is minimal
among all points in $(x+V)\cap [\poi]$. The resulting graph is
clearly a forest and it is shown in \cite[pp.\ 10-11]{HP} that
the trees are indeed one-ended. Directed infinite
paths can then be created from each tree as in the proof of
Lemma \ref{lem:doubly_infinite_paths}.}
\end{remark}

\begin{proof}[Proof of Theorem \ref{th:anything} (b)]
It is sufficient to provide a factor matching scheme where all
vertices of degree at least 2 belong to a single
infinite component.

To match the stubs in the Poisson configuration in such a way
that an infinite component is obtained we proceed as follows.
First consider all vertices of degree at least 2 and create in
a translation-invariant and deterministic way a directed infinite
path that contains all of them; this is possible by Lemma \ref
{lem:doubly_infinite_paths} (or, if we opt for a union of infinite
paths which is sufficient for the existence but not for the uniqueness
of the infinite component, by the more elementary result in
Remark \ref{rem:weaker_doubly_infinite_paths}). When this is
done we are left with a reduced stub configuration. This is
then matched up using the stable multi-matching described prior
to Proposition \ref{prop:sm_works} with the obvious
modification that we do not allow connections between points
that already have an edge between them arising from the
connections along the paths. Proposition
\ref{prop:sm_works} along with Remark \ref{rem:sm_works_with_some_edges_already_present}
guarantee that this indeed leads to a multi-matching.
\end{proof}

For the proof of Theorem \ref{th:anything} (c), one more lemma
-- a generalization of Proposition \ref{prop:usm} (b) -- will
be convenient.

\begin{lemma} \label{connect_simple_with_multi}
Let $\nu$ be a probability measure on the strictly positive
integers and let $X$ be a random variable with law $\nu$. Let
$\mathcal{R}$ and $\mathcal{S}$ be translation-invariant point
processes on $\RR^d$, jointly ergodic under translations, and
with finite intensities $\lambda_\mathcal{R}$ and
$\lambda_\mathcal{S}$ satisfying
\begin{equation}\label{eq:int_ineq}
\lambda_\mathcal{R} \leq \E[X]\lambda_\mathcal{S} \, .
\end{equation}
Assign degree $1$ to each point in $\mathcal{R}$ and assign
i.i.d.\ degrees with law $\nu$ to the points in $\mathcal{S}$.
If $[\mathcal{R}]\cup [\mathcal{S}]$ is almost surely
non-equidistant and has no descending chains, then there exists
a translation-invariant partial matching scheme, a
deterministic function of $(\mathcal{R},\mathcal{S})$, that
matches each point in $[\mathcal{R}]$ to a stub in
$[\mathcal{S}]$. If (\ref{eq:int_ineq}) holds with equality,
then the procedure also exhausts all stubs in $[\mathcal{S}]$.
\end{lemma}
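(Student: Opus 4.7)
The plan is to define a bipartite multi-matching by an iterative procedure directly analogous to the one underlying Proposition~\ref{prop:sm_works}. At stage $n$, consider all $r\in[\mathcal{R}]$ that still carry no edge together with all $s\in[\mathcal{S}]$ whose stub count has not yet been exhausted. Within this bipartite set, create an edge between every mutually closest pair $(r,s)$---meaning $s$ is the nearest still-available $\mathcal{S}$-point to $r$ and $r$ is the nearest still-available $\mathcal{R}$-point to $s$---and decrement the unused-stub count of $s$ by one. The procedure runs through countably many stages and is a deterministic, translation-invariant function of $(\mathcal{R},\mathcal{S})$ together with the degree marks; it is simple because each $r$ becomes unavailable after acquiring its single edge and each $s$ after using up its $D_s$ stubs.

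Let $\mathcal{R}'$ and $\mathcal{S}'$ denote the translation-invariant point processes of $\mathcal{R}$-points that remain unmatched and $\mathcal{S}$-points that still carry unused stubs after the procedure has been carried out. The core claim is that at least one of $[\mathcal{R}']$ and $[\mathcal{S}']$ is empty almost surely. Suppose toward contradiction that both have positive intensity. Define a map $\phi$ on $[\mathcal{R}']\cup[\mathcal{S}']$ by sending each $r\in[\mathcal{R}']$ to its nearest point in $[\mathcal{S}']$ and each $s\in[\mathcal{S}']$ to its nearest point in $[\mathcal{R}']$ (well-defined by finiteness of the intensities). If $\phi(r)=s$ and $\phi(s)=r$ for some pair, then any $\mathcal{S}$-point closer to $r$ than $s$ fails to lie in $[\mathcal{S}']$ (else it would beat $s$ as $\phi(r)$) and therefore has all its stubs used up by some finite stage; likewise every $\mathcal{R}$-point closer to $s$ than $r$ is matched by some finite stage. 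Only finitely many such points exist, so beyond the maximum of these stages the pair $(r,s)$ becomes mutually closest among all still-available points, and the procedure would match them---contradicting $r\in[\mathcal{R}']$. With 2-cycles ruled out, the orbit $u,\phi(u),\phi^2(u),\ldots$ has strictly decreasing consecutive distances by the non-equidistance assumption, yielding a descending chain in $[\mathcal{R}]\cup[\mathcal{S}]$ and contradicting the hypothesis.

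The remaining step is a density accounting. Each edge consumes one $\mathcal{R}$-point and one $\mathcal{S}$-stub, so the intensity of matched $\mathcal{R}$-points coincides with the intensity of used $\mathcal{S}$-stubs. Writing $\lambda'$ for the intensity of $[\mathcal{R}']$ and $\eta$ for the expected number of unused stubs per unit volume, this gives
\[
\lambda_\mathcal{R}-\lambda' \;=\; \lambda_\mathcal{S}\,\E[X]-\eta.
\]
If $[\mathcal{R}']$ had positive intensity, the previous paragraph forces $[\mathcal{S}']$ to be empty and hence $\eta=0$, yielding $\lambda_\mathcal{R}-\lambda'=\lambda_\mathcal{S}\E[X]\geq\lambda_\mathcal{R}$, which is impossible. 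Thus $[\mathcal{R}']=\emptyset$ a.s., i.e.\ every $r\in[\mathcal{R}]$ is matched. In the equality case $\lambda_\mathcal{R}=\lambda_\mathcal{S}\E[X]$ the identity collapses to $\eta=\lambda'=0$, so all stubs of $[\mathcal{S}]$ are exhausted as well.

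The delicate step I expect to require the most care is the exclusion of 2-cycles of $\phi$: it hinges on combining the bipartite mutually-closest matching rule with the fact that each point has only finitely many candidates within any bounded distance and that points outside $[\mathcal{R}']$ or $[\mathcal{S}']$ must drop out of the available pool after a finite number of stages. Once this is in place, everything else is a routine adaptation of the arguments behind Propositions \ref{prop:usm}(b) and \ref{prop:sm_works}.
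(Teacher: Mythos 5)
Your proof is correct, but it takes a genuinely different route from the paper's. The paper splits $\mathcal{R}$ into sub-processes $\mathcal{R}_1,\ldots,\mathcal{R}_m$ by thresholding the nearest-neighbour distance (the same device as in the proof of Theorem \ref{th:anything}~(a)), with the thresholds tuned so that the intensity of $\mathcal{R}_i$ equals that of $\mathcal{S}_i:=\{s\in[\mathcal{S}]: D_s\geq i\}$ for $i<m$ and is dominated by it for $i=m$; it then applies the already-established bipartite matching result, Proposition \ref{prop:usm}~(b), once for each ``stub layer'' $i$. You instead run a bipartite stable multi-matching directly (mutually-closest available pairs across the bipartition, with multiplicity $D_s$ on the $\mathcal{S}$-side) and re-prove exhaustion from scratch: the two-cycle exclusion plus the descending-chain argument shows at least one of the leftover sets is empty, and a mass-transport intensity count (Lemma \ref{le:mt}) rules out leftover $\mathcal{R}$-points under \eqref{eq:int_ineq} and leftover stubs under equality. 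Your key steps all check out: the two-cycle exclusion correctly uses that points outside $[\mathcal{R}']\cup[\mathcal{S}']$ drop out at finite stages and that only finitely many lie within the relevant radius; the strict decrease along $\phi$-orbits follows from non-equidistance (including the shared-endpoint clause $|x-y|=|y-v|$); and the intensity identity survives the case $\E[X]=\infty$, where the unused-stub intensity is automatically infinite and forces $[\mathcal{R}']=\emptyset$ via the first part. What the paper's approach buys is economy -- it reuses Proposition \ref{prop:usm}~(b) as a black box at the cost of some bookkeeping with the $p_i$ and $r_i$; what yours buys is a more self-contained and arguably more natural construction (a bipartite analogue of Proposition \ref{prop:sm_works}), which also produces the single canonical stable object rather than a layered composite.
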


\begin{proof}
Define
$$
m=\inf\left\{j:\sum_{i=1}^j\PP(X\geq i)\lambda_\mathcal{S}\geq \lambda_\mathcal{R}\right\}
$$
(with $m=\infty$ if (\ref{eq:int_ineq}) holds with equality) and, if $m\geq 2$, let
$$
p_i=\frac{\PP(X\geq i)\lambda_\mathcal{S}}{\lambda_\mathcal{R}}, \quad i=1,\ldots,m-1,
$$
and if $m<\infty$ let $p_m=1-\sum_{i=1}^{m-1}p_i$. If $m=1$,
just let $p_1=1$. As in the proof of Theorem \ref{th:anything}
(a), we let $R^*$ be the distance from the origin to the
closest other point in the Palm version of $\mathcal{R}$ and,
analogously to (\ref{eq:type_assignment}), we define real
numbers $0<r_1<\cdots < r_{m-1}$ such that
\[
\begin{array}{lcl}
\PP^*(R^*\leq r_1) & = & p_1 \\
\PP^*(r_1< R^*\leq r_2) & = & p_2 \\
& \vdots &\\
\PP^*(r_{m-2}<R^*\leq r_{m-1}) & = & p_{m-1}\\
\PP^*(R^*>r_{m-1}) & = & p_m.
\end{array}
\]
For $x \in [\mathcal{R}]$, say that $x$ is of type
$i\in\{1,\ldots,m-1\}$ if $r_i$ is the first number in the
ordered sequence $r_1<\cdots<r_{m-1}$ that exceeds the distance
from $x$ to the nearest other point in $[\mathcal{R}]$, and of
type $m$ if the distance from $x$ to the nearest other point in
$[\mathcal{R}]$ is larger than $r_{m-1}$. This divides the
process $\mathcal{R}$ into processes $\mathcal{R}_i$
($i=1,\ldots,m$) with intensities $\lambda_\mathcal{R}p_i$.
Write $\mathcal{S}_i$ for the process of vertices in
$\mathcal{S}$ with degree at least $i$. By the choice of $p_i$,
for $i=1,\ldots,m-1$, the intensity of $\mathcal{R}_i$
coincides with the intensity of $\mathcal{S}_i$, and condition
\eqref{eq:int_ineq} implies that the intensity of
$\mathcal{R}_m$ is at most by the intensity of $\mathcal{S}_m$
(indeed, the intensity of $\mathcal{R}_m$ is
$\lambda_\mathcal{R}-\sum_{i=1}^{m-1}\PP(X\geq
i)\lambda_\mathcal{S}$ which does not exceed $P(X\geq
m)\lambda_\mathcal{S}$ by the choice of $m$).

Now, for each $i=1,\ldots,m$, match the points of
$\mathcal{R}_i$ to the  points of $\mathcal{S}_i$ using
Proposition \ref{prop:usm} (b). For $i=1,\ldots,m-1$ we get a
perfect matching of the points (since the intensities of the
processes coincide) and, for $i=m$, it is not difficult to see
that all points in $\mathcal{R}_m$ get matched up (while some
points in $\mathcal{S}_m$ may not be used).
\end{proof}

\begin{proof}[Proof of Theorem \ref{th:anything} (c)]
To show that conditions (i), (ii) and (iii) are equivalent, it
suffices to show that
(iii)$\Rightarrow$(ii)$\Rightarrow$(i)$\Rightarrow$(iii). Since
(iii)$\Rightarrow$(ii) is trivial, we only need to show that
(i)$\Rightarrow$(iii) and that (ii)$\Rightarrow$(i).

To prove (i)$\Rightarrow$(iii), first note that the matching scheme
described in the proof of Theorem \ref{th:anything} (b)
gives a connected graph as soon as all vertices have
degree at least 2 (provided we use the construction in
Lemma \ref{lem:doubly_infinite_paths} rather than the one in
Remark \ref{rem:weaker_doubly_infinite_paths}).
To handle the case where $\PP(D=1)>0$, let $X$ be a random variable
distributed as $D-2$ conditional on that $D\geq 3$ and note that
$\E[D]\geq 2$ implies that
\begin{equation}  \label{eq:E=2_rewrite}
\PP(D=1) \leq \E[X]\PP(D\geq 3)
\end{equation}
with equality if and only if $\E[D]=2$.

Consider first the case $\E[D]=2$. For a matching scheme here,
first employ the scheme in the proof of Theorem
\ref{th:anything} (b) in order to connect up all points in
$[\mathcal{R}]$ that are assigned degree $2$ or more into an
infinite path. This leaves the points that are assigned degree
$1$, plus the points initially assigned degree $3$ or more,
each having two of their stubs already matched. Since
(\ref{eq:E=2_rewrite}) holds with equality, Lemma
\ref{connect_simple_with_multi} is exactly tailored to produce
a factor matching of the degree $1$ points to the unmatched
stubs of the degree $\geq3$ points. This gives a connected
graph, so the case $\E[D]=2$ is settled.

For the case $\E[D]>2$ we proceed as with $\E[D]=2$ by first
constructing the infinite path and then connecting up degree
$1$ points to it by the scheme offered in the proof of Lemma
\ref{connect_simple_with_multi}. This time, however, the latter
scheme, although resulting in a connected graph, fails to use
up all the stubs of the degree $\geq 3$ points. These remaining
stubs can be hooked up to each other by the stable multi matching
scheme described prior to Proposition \ref{prop:sm_works} with
the restriction that we do not allow connections between points that
already have an edge between them on the infinite path. Using Remark
\ref{rem:sm_works_with_some_edges_already_present}, this completes the
proof of the (i)$\Rightarrow$(iii) implication.

To prove (ii)$\Rightarrow$(i) we employ a mass-transport
argument. Assume (ii), and let $\mat$ be a matching scheme that
produces a connected graph. Consider the mass transport where
each point $x\in[\poi]$ sends a unit mass to $y\in[\poi]$ if
and only if $x$ and $y$ are connected by an edge, and removing
that edge would leave $x$ in a finite component.  Note that the
mass $M_x^\text{out}$ sent from $x$ cannot exceed 1. We claim
that, for any vertex $x$,
\begin{equation}\label{eq:mass_bound}
D_x-2\geq M_x^\text{in}-M_x^\text{out} \, .
\end{equation}
This follows by considering separately the two cases
$M^\text{out}_x=1$ and $M^\text{out}_x=0$. When
$M_x^\text{out}=1$ we get $M_x^\text{in}= D_x-1$ and
(\ref{eq:mass_bound}) holds with equality. When
$M_x^\text{out}=0$ we have that $x$ is connected to infinity
via at least two edges adjacent to it, which implies that
$M_x^\text{in} \leq D_x-2$, and again (\ref{eq:mass_bound})
holds.

By the mass transport principle (Lemma \ref{le:mt}), the
expectation of the right-hand side of (\ref{eq:mass_bound})
summed over all Poisson points $x$ in the unit cube $Q$ is $0$.
But the expectation of $D_x-2$ summed over all Poisson points
in the unit cube $Q$ is simply $\E[D]-2$ (because the Poisson
process has intensity $1$), so (\ref{eq:mass_bound}) implies
$\E[D]\geq 2$, as desired.
\end{proof}

\section{Percolation in the stable multi-matching} \label{sect:perc}

In this section we prove Theorem \ref{th:sm} (a), that is, we
show that if each Poisson point has sufficiently many stubs
attached to it, then the edge configuration resulting from the
stable multi-matching percolates. The proof uses the result
from \cite{LSS} concerning domination of {\bf
$r$-dependent} random fields by product measures, where a
random field $\{X_z\}_{z\in \Z^d}$ is said to be $r$-dependent
if for any two sets $A,B \in \Z^d$ at $l_\infty$-distance at
least $r$ from each other we have that $\{X_z\}_{z\in A}$ is
independent of $\{X_z\}_{z\in B}$. The version we need is as
follows.

\begin{theorem}[Liggett, Schonmann $\&$ Stacey (1997)]\label{th:lss}
For each $d \geq 2$ and $r\geq 1$ there exists a
$p_c=p_c(d,r)<1$ such that the following holds. For any $r$-dependent
random field $(X_z)_{z\in
\Z^d}$ satisfying $\PP(X_z=1)=1-\PP(X_z=0)\geq p$ with $p
>p_c$, the 1's in $(X_z)_{z\in \Z^d}$ percolate almost
surely.

\end{theorem}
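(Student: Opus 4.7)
The plan is to prove this via stochastic domination: show that any $r$-dependent field $(X_z)_{z\in\Z^d}$ with marginal probability of $1$ at least $p$ stochastically dominates an independent Bernoulli$(q)$ site process on $\Z^d$, where $q=q(p)\to 1$ as $p\to 1$, and then invoke the classical fact that Bernoulli site percolation on $\Z^d$ with $d\geq 2$ produces an infinite cluster of $1$'s whenever $q$ exceeds the (strictly less than $1$) site percolation threshold $p_c^{\mathrm{site}}(\Z^d)$.

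A standard blocking step first reduces the problem to the case $r=1$. Partition $\Z^d$ into cubes of side length $r$, identify the cubes with sites of a new copy of $\Z^d$, and set $Y_B:=\prod_{z\in B}X_z$. By the $r$-dependence of $(X_z)$, the field $(Y_B)$ is $1$-dependent on the block lattice; its marginal satisfies $\PP(Y_B=1)\geq 1-r^d(1-p)$, which tends to $1$ as $p\to 1$; and percolation of $1$'s among the $Y_B$'s implies percolation of $1$'s among the $X_z$'s on the original lattice. So it suffices to treat $1$-dependent fields.

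The heart of the argument is the domination statement for $1$-dependent fields. I would enumerate $\Z^d$ as $z_1,z_2,\ldots$ and inductively construct, on an enlarged probability space, i.i.d.\ Bernoulli$(q)$ variables $Z_{z_i}$ with $Z_{z_i}\leq X_{z_i}$. By $1$-dependence, the conditional law of $X_{z_i}$ given $(X_{z_j})_{j<i}$ only depends on the already-revealed values at sites within $\ell^\infty$-distance $1$ of $z_i$. The crucial quantitative step is a uniform lower bound of the form
\[
\PP\bigl(X_{z_i}=1\,\big|\,X_{z_1},\ldots,X_{z_{i-1}}\bigr)\geq q(p),
\]
valid almost surely no matter which of the finitely many local patterns has been observed, with $q(p)\to 1$ as $p\to 1$. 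Once this is in hand, we set $Z_{z_i}=1$ with conditional probability $q(p)/\PP(X_{z_i}=1\mid\cdots)$ given $X_{z_i}=1$, and $Z_{z_i}=0$ otherwise, producing the required coupling; choosing $p$ so large that $q(p)>p_c^{\mathrm{site}}(\Z^d)$ then yields percolation.

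The main obstacle is this conditional lower bound, because the hypothesis supplies only a \emph{marginal} control on $X_z$ and conditioning on many neighbours being $0$ could a priori drag $\PP(X_{z_i}=1\mid\cdots)$ all the way to $0$. The route I would take follows the inductive argument of Liggett, Schonmann and Stacey: repeatedly apply the elementary inequality $\PP(A\mid B)\geq 1-\PP(A^c)/\PP(B)$, together with an inductive lower bound on joint probabilities $\PP(X_{z_{j_1}}=1,\ldots,X_{z_{j_k}}=1)$ for families at pairwise $\ell^\infty$-distance $\geq 2$ (which, by $1$-dependence, factor as products of marginals), in order to convert the marginal bound $\PP(X_z=1)\geq p$ into a conditional bound $q(p)$ that is uniform in the configuration and satisfies $q(p)\to 1$ as $p\to 1$. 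This $q(p)$ is the constant driving the choice of $p_c(d,r)$ in the statement.
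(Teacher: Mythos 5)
First, note that the paper offers no proof of this statement: it is imported verbatim from \cite{LSS}, so there is no internal argument to compare yours against, and I am judging your proposal against what is actually needed to establish the result. Your outer scaffolding is the standard one and is fine: the blocking reduction of an $r$-dependent field to a $1$-dependent field on a rescaled lattice, the union bound $\PP(Y_B=1)\geq 1-r^d(1-p)$, and the final comparison with supercritical Bernoulli site percolation on $\Z^d$ are all correct and are exactly how the quoted theorem is meant to be used.

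The gap is in the step you yourself identify as the heart of the matter, and it is fatal to the argument as written. The uniform conditional bound $\PP(X_{z_i}=1\mid X_{z_1},\dots,X_{z_{i-1}})\geq q(p)$ with $q(p)\to1$ as $p\to1$, valid for \emph{every} observed pattern, is simply false for $1$-dependent fields, so no induction based on $\PP(A\mid B)\geq 1-\PP(A^c)/\PP(B)$ and factorization over well-separated families can deliver it. For a counterexample take $(Y_z)_{z\in\Z^d}$ i.i.d.\ Bernoulli$(s)$ and $X_z:=Y_z\prod_{i=1}^{d}Y_{z+e_i}$; restrictions of $X$ to sets at $\ell^\infty$-distance at least $2$ are independent and $\PP(X_z=1)=s^{d+1}\to1$, yet
\[
\PP\Bigl(X_z=1\,\Big|\,\textstyle\bigcap_{j=1}^{d}\{X_{z-e_j}=0\}\Bigr)
\;\leq\;\frac{s^{d+1}\bigl(1-s^{d}\bigr)^{d}}{1-s}\;\sim\;d^{d}(1-s)^{d-1}\;\longrightarrow\;0
\qquad(d\geq2),
\]
since the conditioning event contains $\{Y_z=0\}$ and hence has probability at least $1-s$. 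Even the milder example $X_z=Y_zY_{z+e_1}$ gives $\PP(X_z=1\mid X_{z+e_1}=0)=s^2/(1+s)\to 1/2$. Because every enumeration of $\Z^d$ eventually reveals sites with previously revealed neighbours, your sequential coupling cannot produce a Bernoulli minorant of density anywhere near $1$; this is exactly the difficulty that makes the Liggett--Schonmann--Stacey domination theorem nontrivial, and their proof does not proceed by bounding conditional probabilities given arbitrary configurations. To make your write-up correct you must either reproduce their genuinely more delicate induction from \cite{LSS}, or sidestep domination entirely with a direct Peierls argument: any set of $n$ sites contains at least $n/(2r+1)^d$ pairwise independent sites, so a blocking contour of $0$'s of size $n$ has probability at most $(1-p)^{n/(2r+1)^d}$, which for $p$ close to $1$ beats the exponential count of contours and already yields the percolation statement needed here.
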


\begin{proof}[Proof of Theorem \ref{th:sm} (a)] The idea of the proof
is a renormalization procedure. We partition $\RR^d$ into cubes
and declare a cube to be \textbf{good} if it contains at least
one but not too many Poisson points and if the same holds for
all cubes close to it, where ``too many'' and ``close'' will be
specified below. We then use Theorem \ref{th:lss} to deduce
that the good cubes can be made to percolate, and we observe
that, if each Poisson point has sufficiently many stubs
attached to it, then each point in a good cube must be
connected to each point in its adjacent cubes in the stable
multi-matching. This forces the existence of an infinite
component in the stable multi-matching.

To make this reasoning more precise,
for $a\in\RR$, let $a\Z^d=\{az: z\in\Z^d\}$
and partition $\RR^d$ into cubes $\{C_{az}\}_{z\in\Z^d}$ centered at
the points of $a\Z^d$ and with side $a$. Two cubes $C_{az}$ and $C_{ay}$
are called adjacent if $|z-y|=1$, and we write $m=m(d)$ for the smallest
integer such that the maximal possible Euclidean distance between points
in adjacent cubes does not exceed $ma$. For each cube $C_{az}$ a super-cube
$S_{az}$ is defined, consisting of the cube itself along with all cubes
$C_{ay}$ with $y$ at $l_\infty$-distance
at most $2m$ from $z$. A super-cube hence
contains $(4m+1)^d$ cubes.

Now, call a cube $C_{az}$ \textbf{acceptable} if it contains at
least one and at most $n=n(d)$ Poisson points, where $n$ will
be specified below, and it is \textbf{good} if all cubes in
$S_{az}$ are acceptable. We have that
$$
\PP(C_{az}\text{ is acceptable})=
1-\PP[\poi(C_{az})=0]-\PP[\poi(C_{az})>n].
$$
The first probability on the right side can be made arbitrarily
small by taking $a$ large, and, for a fixed $a$, the second
probability can be made arbitrarily small by taking $n$ large.
Hence, by choosing first $a$ large and then $n$ very large, we
can make the probability that a cube is good come arbitrarily
close to 1. In particular, by Theorem \ref{th:lss}, we can make
it large enough to guarantee that the good cubes percolate. Fix
such values of $a$ and $n$, let $k=n(4m+1)^d$ and assume that
$\PP(D\geq k)=1$. We will show that then each Poisson point in
a good cube is connected to all Poisson points in the adjacent
cubes, which completes the proof.

Say that a point $x\in[\poi]$ with $D_x$ stubs \textbf{desires}
a point $y\in[\poi]$ if $y$ is one of the $D_x$ nearest points
to $x$ in $[\poi]$. Then a Poisson point $x$ in a good cube
$C_{az}$ desires all points in the adjacent cubes: For any
point $y$ in an adjacent cube, the distance between $x$ and $y$
is at most $ma$, and the Euclidean ball $B_{ma}(x)$ with radius
$ma$ centered at $x$ is contained in the supercube $S_{az}$,
which contains at most $k$ Poisson points (indeed, all
$(4m+3)^d$ cubes in $S_{az}$ are acceptable, which means that
each one of them contains at most $n$ points). Since $D_x\geq
k$, it follows that $y$ desires all points in $B_{ma}(x)$, so
in particular $x$ desires $y$. Furthermore, each Poisson point
$y$ in a cube that is adjacent to a good cube $C_{az}$ desires
each point in the good cube: Since the distance between $x$ and
$y$ is at most $ma$, we have that $B_{ma}(y)\subset
B_{2ma}(x)$. Moreover, the ball $B_{2ma}(x)$ is contained in
the supercube $S_{az}$, which contains at most $k$ points.
Hence $B_{ma}(y)$ contains at most $k$ points and, since
$D_y\geq k$, it follows that $y$ desires all points in
$B_{ma}(y)$.

We have shown that each point in a good cube desires each point
in the adjacent cubes and vice versa. All that remains is to
note that two points that desire each other will indeed be
matched.  This follows from the definition of the stable
multi-matching.
% at some stage in the stable multi-matching
%algorithm, since otherwise they would form an unstable pair.
Hence each point in a good cube is connected to each point in
the adjacent cubes and since the good cubes percolate this
proves part (a) of Theorem \ref{th:sm}.
\end{proof}

\begin{remark}
{\rm An easy modification of the above proof reveals that the
following slightly stronger variant of Theorem \ref{th:sm}
holds. For any $\varepsilon>0$, there exists
$k=k(d,\varepsilon)$ such that if $\PP(D\geq k)>\varepsilon$,
then $\PP^*(C=\infty)>0$. }
\end{remark}

\section{Non-percolation in the stable multi-matching}  \label{sect:non-perc}

In this section we prove Theorem \ref{th:sm} (b), that is, we show that
if all points have degree at most 2 and there is a positive probability
for degree 1, then almost surely the stable multi-matching gives
configurations with only finite components. The proof is based on the
following lemma.

\begin{lemma}\label{le:no_si}
In any translation-invariant matching scheme, a.s.\ $G$ has no
component consisting of a singly infinite path.
%If $\PP(D\leq 2)=1$, then, for any $d\geq 1$, an edge
%configuration obtained from a translation-invariant matching
%scheme almost surely does not contain components consisting of
%a singly infinite path.
\end{lemma}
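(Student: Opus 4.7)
The plan is to argue by contradiction using the Mass Transport Principle (Lemma \ref{le:mt}). Suppose, for contradiction, that with positive probability some component of $G$ is a singly infinite path $x_0, x_1, x_2, \ldots$, with $x_0$ of degree $1$ and each $x_i$ ($i \ge 1$) of degree $2$. Such a component has a unique endpoint, namely $x_0$. Because the matching scheme is translation-invariant, the set of endpoints of singly-infinite-path components is itself a translation-invariant point process, and under the contradiction hypothesis it has strictly positive intensity.

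Next, I would define a mass transport $T$ by letting each vertex $x \in [\poi]$ that lies in a singly-infinite-path component send one unit of mass to the unique endpoint of its component; all other mass is set to zero. This is a deterministic, translation-equivariant function of $(\poi,\xi,\mat)$, and therefore a mass transport in the sense of Section \ref{sect:background}. The outflow obeys $\E\, T(Q,\RR^d) \le \E\,|[\poi]\cap Q| = 1$, since every vertex emits at most one unit of mass. The inflow, by contrast, is infinite: on the positive-probability event that $Q$ contains the endpoint of some singly-infinite-path component, that endpoint receives one unit from each of the infinitely many vertices of its component, so $\E\, T(\RR^d,Q) = \infty$.

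This contradicts Lemma \ref{le:mt}, and the lemma follows. The crux of the argument is the choice of transport: by routing the mass of each singly-infinite-path component to its unique endpoint, we concentrate infinite mass at points of positive density while keeping the per-vertex outflow bounded by $1$. The only issue to check carefully is translation-equivariance, but this is immediate since the properties ``lies in a singly-infinite-path component'' and ``is the endpoint of a singly-infinite-path component'' are measurable, translation-equivariant functions of $(\poi,\xi,\mat)$; I do not expect this to pose a real obstacle.
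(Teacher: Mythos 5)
Your proof is correct and is essentially identical to the paper's: the paper uses exactly the same mass transport, sending unit mass from each vertex of a singly infinite path to the path's endpoint, and derives the same contradiction between bounded expected outflow from $Q$ and infinite expected inflow. No issues.
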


\begin{proof} Assume that components consisting of a singly infinite
paths occur with positive probability, and consider the mass
transport where each Poisson point that sits on a singly
infinite path sends mass 1 to the endpoint of the path. With
positive probability the unit cube $Q$ contains such an
endpoint, and hence the expected mass that is received by $Q$
is infinite. But the expected mass that is sent out from $Q$ is
at most $1$, because the expected number of Poisson points in
$Q$ is $1$. This contradicts the mass transport principle
(Lemma \ref{le:mt}).
\end{proof}

\begin{proof}[Proof of Theorem \ref{th:sm} (b)] Let the degree
distribution be such that $\PP(D\leq 2)=1$ and $\PP(D=1)>0$.
The only infinite components that can occur when $\PP(D\leq
2)=1$ are infinite paths of degree 2 vertices that are
connected to each other and, by Lemma \ref{le:no_si}, any such
path has to be bi-infinite. Assume for contradiction that such
a bi-infinite path occurs with positive probability. We will
describe a coupled configuration of vertex degrees, where, with
positive probability, the edge configuration
 remains unchanged except that a doubly infinite path is cut apart and
turned into two singly infinite paths. This conflicts with
Lemma \ref{le:no_si}.

Given the Poisson process $\poi$ with associated degrees
$\{D_x\}_{x \in [\poi]}$, we now introduce a modified degree
process $\{\widetilde{D}_x\}_{x \in [\poi]}$. Conditional on
$\poi$ and $\{D_x\}_{x \in [\poi]}$, let
$\{\widetilde{D}_x\}_{x \in [\poi]}$ be independent random
variables chosen as follows.  With probability $1-e^{-|x|}$, we
set $\widetilde{D}_x=D_x$, where $|x|$ is the Euclidean
distance from $x$ to the origin. With the remaining probability
$e^{-|x|}$, the degree $\widetilde{D}_x$ is taken to be an
independent random variable with law $\mu$. The Poisson points
that receive a newly generated degree in $\widetilde{D}_x$ are
referred to as \textbf{re-randomized}. Note, crucially, that
$\{\widetilde{D}_x\}_{x \in [\poi]}$ has the same distribution
as $\{D_x\}_{x \in [\poi]}$. Note also that
$$
\E\sum_{x\in[\poi]}\mathbf{1}[x\text{ re-randomized}]=
\int_{\RR^d}e^{-|x|}dx<\infty \, .
$$
Hence, by the Borel-Cantelli lemma, the number
of re-randomized points is finite almost surely.

Now, take a configuration of Poisson points and associated
degrees for which the graph $G$ resulting from the stable
multi-matching contains some bi-infinite path.  Let
$\widetilde{G}$ be the graph resulting from the modified
degrees $\{\widetilde{D}_x\}_{x \in [\poi]}$.  Along each such
path $(x_i)_{i=-\infty}^\infty$, there must be an edge that is
locally maximal, that is, an edge $(x_i,x_{i+1})$ with
$|x_{i+1}-x_i|>\max\{|x_i-x_{i-1}|,|x_{i+2}-x_{i+1}|\}$. To see
this, note that if such an edge did not exist, the vertices of
the path would either constitute a descending chain, or contain
a single locally minimal edge (defined in obvious analogy with
locally maximal). Descending chains do not occur in Poisson
processes (as noted in Section \ref{sect:background_stable})
while chains with a single minimal edge are ruled out by a mass
transport argument similar to the one on the proof of Lemma
\ref{le:no_si} (let each vertex on such a path send unit mass
to the midpoint of the unique invariant edge). Let
$(x_m,x_{m+1})$ be a locally maximal edge -- say the one with a
vertex closest to the origin. Write $A$ for the event that
$x_m$ and $x_{m+1}$ are the only two vertices that are
re-randomized and that
$\widetilde{D}_{x_{m}}=\widetilde{D}_{x_{m+1}}=1$, that is, the
degrees of $x_m$ and $x_{m+1}$ are changed to 1's while the
rest of the degrees remain unchanged.

We claim that, on $A$, the modified graph $\widetilde{G}$
consists of the same edges as in the original $G$ except that
the edge between $(x_m,x_{m+1})$ is absent. Indeed, since
$|x_{m+1}-x_m|\geq \max\{|x_{m+2}-x_{m+1}|,|x_m-x_{m-1}|\}$,
the edge $(x_m,x_{m+1})$ is created at a later stage in the
matching procedure than the edges $(x_{m-1},x_m)$ and
$(x_{m+1},x_{m+2})$. On the event $A$, no stubs have been added
or removed in the modified configuration except that one stub
is taken away from each of $x_m$ and $x_{m+1}$. Hence, up until
the stage when the edge $(x_m,x_{m+1})$ was created in the
original process, precisely the same edges are created in the
modified configuration.  At this stage, the vertices $x_m$ and
$x_{m+1}$ do not have a stub on them, and so the edge
$(x_m,x_{m+1})$ is not created in the modified configuration.
After this stage, the situation is as in the original
configuration, and so the same edges are again created.

The above shows that, on $A$, the modified stable
multi-matching for the modified configuration contains two
singly infinite paths $(x_{m+1},x_{m+2},\ldots)$ and
$(x_m,x_{m-1},\ldots)$. All that remains it to note that, since
the number of re-randomized vertices is finite almost surely,
the event $A$ has positive probability.  We have hence derived
a contradiction with Lemma \ref{le:no_si}.
\end{proof}

\section{Open problems}  \label{sect:further}

\paragraph{Closing the gap in Theorem \ref{th:sm}.} Theorem
\ref{th:sm} provides conditions for when the stable
multi-matching contains an infinite component and for when it
consists only of finite components. These conditions however
are quite far apart and it would be desirable to obtain a more
precise understanding for when the stable multi-matching
percolates. Consider for instance the case with exactly two
stubs attached to each point, that is, $\mu(\{2\})$. Do
infinite components occur in this case? Simulations appear to
suggest a positive answer $d=1$, but are less conclusive in
$d=2$.

Theorem \ref{th:sm} (b) states that percolation does not occur
when there are only degree 1 and degree 2 vertices. Roughly
speaking, this is because the degree 1 vertices serve as dead
ends in the configuration. Does this phenomenon persist when a
small proportion of degree 3 vertices is added? Does a
sufficiently large proportion of degree 1 vertices always
guarantee non-percolation?

Finally, if degree distribution $\mu$ results in an infinite
cluster, and we replace $\mu$ by a distribution $\mu'$ that
stochastically dominates $\mu$, do we still get an infinite
cluster?

\section*{Acknowledgement} Much of this work was carried out at
the 2009 programme in Discrete Probability at the Institut
Mittag-Leffler.  We thank the institute for the generous
support and hospitality. The research of the first author was
supported by the Swedish Research Council. The research of the
second author was supported by the G\"oran Gustafsson Foundation
for Research in the Natural Sciences and Medicine.

\end{document}